\documentclass[11pt]{article}

\usepackage{bbold}
\usepackage{amsmath,amsthm,amssymb,amsfonts}
\usepackage{bm}
\usepackage{colonequals}
\usepackage{fullpage}
\usepackage{dsfont}
\usepackage{graphicx}
\usepackage{xcolor}
\usepackage{hyperref}
\usepackage{comment}
\usepackage{tikz-cd}
\usepackage{enumitem}

\renewcommand{\emptyset}{\varnothing}

\newcommand{\conn}{\mathrm{conn}}

\newcommand{\Ex}{\mathop{\mathbf{E}}}

\newcommand{\sI}{\mathcal{I}}
\newcommand{\sN}{\mathcal{N}}

\newcommand{\sM}{\mathcal{M}}

\newcommand{\One}{\bm{1}}
\newcommand{\CC}{\mathbb{C}}
\newcommand{\EE}{\mathbf{E}}

\newcommand{\RR}{\mathbb{R}}

\newcommand{\Sym}{\mathrm{Sym}}

\newcommand{\Part}{\mathrm{Part}}

\newcommand{\Tr}{\mathrm{Tr}}
\newcommand{\sym}{\mathrm{sym}}

\newtheorem{theorem}{Theorem}[section]
\newtheorem{remark}[theorem]{Remark}

\newtheorem{definition}[theorem]{Definition}

\newtheorem{proposition}[theorem]{Proposition}

\title{Gurau's spectral density is not a probability measure for individual real symmetric tensors}

\usepackage{authblk}

\author[1]{Maximilian Jerdee\thanks{Email: \texttt{mjerdee@umich.edu}.}}

\author[2]{Dmitriy Kunisky\thanks{Email: \texttt{kunisky@jhu.edu}.}}

\author[1]{Cristopher Moore\thanks{Email: \texttt{moore@santafe.edu}. Partially supported by the National Science Foundation through grant BIGDATA-1838251.}}

\affil[1]{Santa Fe Institute}

\affil[2]{Department of Applied Mathematics \& Statistics, Johns Hopkins University}

\date{March 6, 2026}

\begin{document}

\maketitle

\begin{abstract}
    Gurau (2020) proposed a generalization of the trace of the matrix resolvent to tensors of higher order, and recent work has explored analogs of the Wigner semicircle and Marchenko-Pastur distributions from random matrix theory as well as aspects of free probability theory from this perspective.
    In particular, when evaluated with appropriate large random tensors, the limiting expectations of the coefficients of a series expansion of Gurau's resolvent trace give the moment sequences of probability measures analogous to the above distributions.
    We construct, on the other hand, individual deterministic tensors such that the same coefficients evaluated on those tensors do \emph{not} give the moment sequence of any probability measure.
    Thus, the ``spectral density'' associated to Gurau's resolvent trace, while in a sense defined on average for certain random tensor ensembles, is not defined pointwise (unless perhaps as a signed measure) for all individual tensors.
\end{abstract}

\section{Introduction}

Recall that the classical resolvent trace of a real symmetric matrix $X \in \RR^{N \times N}_{\sym}$ is the function
\begin{equation}
R_X(z) \colonequals \frac{1}{N} \Tr(zI_N - X)^{-1},
\end{equation}
defined on $z \in \CC \setminus \RR$.
Expanding in a Laurent series, for such $z$ we have
\begin{equation}
R_X(z) = \sum_{k \geq 0} z^{-k - 1} \frac{1}{N}\Tr(X^k). \label{eq:resolvent-expansion}
\end{equation}
As is well-known, the coefficients $\frac{1}{N}\Tr(X^k)$ of this expansion are the moments of the empirical distribution of eigenvalues of $X$.

In \cite{Gurau-2020-WignerSemicircleLawTensors}, Gurau proposed a generalization of the resolvent trace of a matrix to real symmetric tensors.
We write $\Sym^p(\RR^N)$ for the set of $p$-ary real symmetric tensors with every axis having dimension $N$ (in particular $\Sym^2(\RR^N) = \RR^{N \times N}_{\sym}$); then, Gurau associated to each $T \in \Sym^p(\RR^N)$ a function $R_T: \CC \setminus \RR \to \CC$ playing the role of the resolvent trace.
We review the details of this proposal in Section~\ref{sec:gurau-resolvent} below.
Gurau's resolvent trace also admits a formal Laurent series expansion analogous to \eqref{eq:resolvent-expansion},
\begin{equation}
\label{eq:RT-exp}
R_T(z) = \sum_{k \geq 0} z^{-k - 1} \frac{1}{N}\sI_k(T),
\end{equation}
for certain homogeneous polynomials $\sI_k(T)$ in the entries of $T$ having degree $k$.
Accepting the analogy between this $R_T$ and the above $R_X$, it is natural to ask if there is a probability measure $\gamma_T$ on $\RR$ whose moments are given by these coefficients, i.e., having
\begin{equation}
\label{eq:muT}
\int x^k\,d\gamma_T(x) = \frac{1}{N} \sI_k(T) \text{ for each } k \geq 0.
\end{equation}
Equivalently, this asks whether $R_T(z)$ is the Stieltjes transform of some $\gamma_T$, but to avoid concerns about convergence of series let us focus on the moment formulation in \eqref{eq:muT}.
If there is such a $\gamma_T$, it ought to play for tensors some version of the role of the empirical distribution of eigenvalues of a matrix.
We call this object, when it exists, the \emph{Gurau spectral density}.

Encouragingly, the work \cite{Gurau-2020-WignerSemicircleLawTensors} found that, for a natural Gaussian distribution of $T$, the limiting \emph{expected} moments $\lim_{N \to \infty} \EE \frac{1}{N} \sI_k(T)$ are indeed a moment sequence, that of a natural tensorial generalization $\gamma = \gamma^{(p)}$ of the semicircle law $\gamma^{(2)}$ appearing prominently in random matrix theory.
Further, \cite{bonnin2024universality} showed the universality of this over more general random $T$ with entries independent up to symmetry.
The results of \cite{bonnin2024tensorial} include both concentration of the $\frac{1}{N} \sI_k(T)$ around their expectations as well as similar analysis for a tensorial generalization of the Marchenko-Pastur distribution and a notion of free convolution for these moment sequences and the associated measures.
However, to the best of our knowledge, previous work has not explored establishing whether, for each given $T \in \Sym^p(\RR^N)$, the Gurau spectral density $\gamma_T$ satisfying \eqref{eq:muT} exists.
This is also explicitly stated as an open problem in (the recently published version of) \cite{bonnin2024universality}.

Our goal is to show that, in fact, there are some deterministic real symmetric tensors for which the Gurau spectral density does not exist:
\begin{theorem}
    \label{thm:main}
    There exists $T \in \Sym^3(\RR^{27})$ for which $\sI_4(T) < 0$, and thus in particular for which there exists no probability measure $\gamma_T$ satisfying \eqref{eq:muT}.
\end{theorem}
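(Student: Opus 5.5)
The plan is to make $\sI_4(T)$ completely explicit for $p=3$, exhibit a small tensor whose sign-indefinite ``tetrahedral'' contraction is negative, and then amplify it by tensor powers. This last step is what I expect the dimension $27 = 3^3$ to reflect.

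\textbf{Step 1: expand $\sI_4(T)$.} Starting from the definition of $R_T$ in Section~\ref{sec:gurau-resolvent}, I would expand the Gaussian-integral/Wick formula defining Gurau's resolvent. This should write $\sI_4(T)$ as a fixed linear combination, with positive combinatorial weights coming from Wick pairings, of $O(N)$-invariant contractions of four copies of $T$. These are indexed by connected (rooted) graphs on four trivalent vertices. For $p=3$ the candidates are:
\begin{itemize}
\item the \emph{tetrahedral} invariant $t(T)=\sum T_{abc}T_{ade}T_{bdf}T_{cef}$;
\item the \emph{melonic} invariants, such as $m(T)=\sum_{a,b}\big(\sum_{c,d}T_{acd}T_{bcd}\big)^2$, which are sums of squares and hence nonnegative;
\item graphs containing a tadpole, that is, a partial trace $\sum_a T_{aab}$.
\end{itemize}
I would carry out this bookkeeping first, since the exact coefficients matter only up to sign and ratio.

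\textbf{Step 2: remove tadpoles and isolate the target inequality.} I will restrict to tensors with vanishing partial traces, $\sum_a T_{aab}=0$ for all $b$, so that every tadpole graph vanishes. Then $\sI_4(T)=\alpha\, m(T)+\beta\, t(T)$ (possibly plus a few other melonic-type terms that are bounded by $m$), with $\alpha,\beta>0$. It therefore suffices to find a traceless symmetric $T$ with $t(T)<0$ and $|t(T)|/m(T)$ larger than an explicit constant.

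\textbf{Step 3: construct $T$ by tensor powering.} First I would find a traceless $S\in\Sym^3(\RR^3)$ with $t(S)<0$. A natural candidate is a tensor with a cyclic $\mathbb{Z}_3$ structure, supported on index triples with $a+b+c\equiv 0$ and with signs or phases chosen so that the tetrahedral sum cancels unfavorably. Small parametric families can be searched by hand.

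Then set $T=S\otimes S\otimes S\in\Sym^3(\RR^{27})$, viewing $\RR^{27}=(\RR^3)^{\otimes 3}$ and grouping indices. Symmetry and tracelessness are preserved under this grouping. Every graph contraction is multiplicative under tensor product, so
\[
t(T)=t(S)^3<0, \qquad m(T)=m(S)^3 .
\]
Hence the ratio $|t|/m$ gets cubed, which pushes it past the threshold from Step 2 as soon as $|t(S)|/m(S)$ exceeds the cube root of that threshold.

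Once $\sI_4(T)<0$ is established, the conclusion is immediate. Any probability measure has nonnegative fourth moment, so no $\gamma_T$ can satisfy \eqref{eq:muT}.

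\textbf{Main obstacle.} I expect two points to be hardest. The first is pinning down the exact coefficients in Step 1, including which graphs appear and their weights. The second is finding a seed $S$ whose ratio $|t(S)|/m(S)$ is large enough that three tensor factors suffice. I would first try to locate $S$ by a small numerical optimization of $t/m$ over traceless $S\in\Sym^3(\RR^3)$, and then rationalize the optimizer so the final verification is an exact finite computation.
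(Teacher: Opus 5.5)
\textbf{The gap: restricting to traceless tensors (Step~2) rules out every counterexample.} On traceless tensors $\sI_4$ is nonnegative, so the terms you discard are exactly the ones that make negativity possible.

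For $p=3$, $k=4$ there are exactly two connected graphs without self-loops: $K_4$, and the melonic graph (a $4$-cycle with two opposite edges doubled). They come from $(3!)^4=1296$ and $3\cdot 2\cdot 3^4\cdot 2^2=1944$ matchings respectively. Proposition~\ref{prop:I-Mconn} then gives, for traceless $T$, the exact identity
\[
\sI_4(T)=\tfrac{1}{162}\bigl(1296\,t(T)+1944\,m(T)\bigr)=8\,t(T)+12\,m(T),
\]
with no further melonic terms.

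Write $T_a$ for the symmetric slice $(T_a)_{bc}=T_{abc}$. Then $t(T)=\sum_{a,b}\Tr\bigl((T_aT_b)^2\bigr)$ and $m(T)=\sum_{a,b}\Tr(T_a^2T_b^2)=\sum_{a,b}\|T_aT_b\|_F^2$. Since $|\Tr(X^2)|\le\|X\|_F^2$ for every real matrix $X$, you get $|t(T)|\le m(T)$ for \emph{every} symmetric $T$. Hence $\sI_4(T)\ge 4\,m(T)\ge 0$ whenever $T$ is traceless.

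This has two consequences for your plan:
\begin{itemize}
\item The ratio $|t|/m>3/2$ you need in Step~2 is unattainable.
\item The tensor-power amplification in Step~3 goes the wrong way. Since $t$ and $m$ are multiplicative and $|t(S)|/m(S)\le 1$, cubing the ratio can only shrink it.
\end{itemize}
As a sanity check, $\langle T,g^{\otimes 3}\rangle=g_1g_2g_3$ is traceless with $t=\frac{1}{216}$ and $m=\frac{1}{108}$, and indeed $\kappa_4(g_1g_2g_3)=27-3=24=1296\,t+1944\,m$.

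\textbf{What works instead.} Any counterexample must have nonzero partial traces, so the negativity has to come from the tadpole graphs. The paper uses exactly this and avoids the graph bookkeeping entirely. By Proposition~\ref{prop:I-Mconn}, $\sI_4(T)$ is a positive multiple of $\kappa_4(\langle T,g^{\otimes 3}\rangle)$, so it suffices to find a homogeneous Gaussian cubic with negative fourth cumulant.
\begin{itemize}
\item The non-homogeneous seed $g_1-\frac{1}{10}g_1^3$ has $\kappa_4=-\frac{87}{250}$.
\item Replacing its linear term by $g_1\cdot\frac1N\sum_{i=2}^{N+1}g_i^2$ shifts $\kappa_4$ by $\frac{6}{N}+\frac{72}{N^2}+\frac{144}{N^3}$, which leaves it negative once $N\ge 26$.
\end{itemize}
The resulting tensor has $\sum_a T_{aa1}=-\frac{1}{10}+\frac13\neq 0$, so it is far from traceless. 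The dimension $27$ is $26+1$, coming from this homogenization, not $3^3$.
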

\noindent
We note that, since $\sI_4(T)$ is a polynomial and in particular is continuous, if one such $T$ exists then so does an entire small ball of positive measure of such $T$ nearby.

This leaves the tensorial resolvent trace $R_T(z)$ in a curious position: on the one hand, the measure $\gamma^{(p)}$ studied by \cite{Gurau-2020-WignerSemicircleLawTensors,bonnin2024universality} whose moments are limits of expectations of $\frac{1}{N} \sI_k(T)$ is well-defined and is a natural generalization of the semicircle law (and likewise for the tensorial Marchenko-Pastur law of \cite{bonnin2024tensorial}).
On the other hand, Theorem~\ref{thm:main} shows that there is not a coherent notion of pointwise spectral density $\gamma_T$ associated to each realization of $T$ (at least that is compatible with the resolvent trace $R_T$), which, if it did exist, one would hope would converge in some weak sense to these limiting distributions like $\gamma^{(p)}$.
We hope Theorem~\ref{thm:main} will encourage future work to clarify this unusual situation.

\begin{remark}
    One possible ``fix'' to this state of affairs is to allow $\gamma_T$ to be a \emph{signed} measure. 
    While we show that for some tensors $T$ the sequence~$\frac{1}{N}\sI_k(T)$ cannot be the moments of a non-negative measure, this sequence can always be expressed as the moments of a signed Radon measure (as can any sequence of real numbers; see \cite{Boas-1939-MomentProblemSignedMeasure} and \cite[Theorem 3.11]{shohat1943problem}).
    Some constructions of signed measures have also been used before in the study of tensor eigenvalues and eigenvectors~\cite{sasakura2023signed,sasakura2024signed,regalado2024usefulness}.
    We thank Razvan Gurau for bringing this possibility to our attention and leave it as an interesting direction for further investigation.
\end{remark}

\section{Background}

We first review the construction of Gurau's resolvent trace $R_T(z)$, as well as giving a slightly different interpretation of the associated ``moments'' $\sI_k(T)$ from that in \cite{Gurau-2020-WignerSemicircleLawTensors,bonnin2024universality}, which will be useful in our construction for Theorem~\ref{thm:main}.

\subsection{Matchings and invariant polynomials}

\begin{definition}[Matchings]
    Write $\sM_{p,k}$ for the set of matchings of $[pk]$, where $[pk]$ is viewed as equipped with the partition $\pi_0 = \{\{1, \dots, p\}, \dots, \{(p - 1)k + 1, \dots, pk\}\}$ into subsets of size $p$.
    For $\mu \in \sM_{p, k}$, write $G(\mu)$ for the multigraph constructed by forming the perfect matching graph on $[pn]$ described by $\mu$ and then gluing the parts of $\pi_0$ together, leading to a $p$-regular multigraph on $n$ vertices which admit a natural labelling by $[n]$.
    Write $\sM_{p,k}^{\conn} \colonequals \{\mu \in \sM_{p, k}: G(\mu) \text{ is connected}\}$.
\end{definition}

\begin{definition}[Matching tensors]
    For~$\mu \in \sM_{p,k}$ viewed as a set of pairs of elements of $[pk]$, $\mu = \{\{a_1,b_1\},...,\{a_{pk/2},b_{pk/2}\}\}$, define the matching tensor~$w(\mu) \in (\RR^N)^{\otimes pk}$ as having, at index $i \in [N]^{pk}$, the value
    \begin{align*}
        w(\mu)_{i} = \prod_{\{a,b\} \in \mu} \One\{i_a = i_b\}.
    \end{align*} 
    For the empty matching, set $w(\emptyset) = 1$.
\end{definition}

\begin{definition}[Invariant polynomials]
    Suppose that $T \in \Sym^p(\RR^N)$.
    For each $\mu \in \sM_{p, k}$, the invariant tensor~$w(\mu)$ defines a polynomial
    \begin{align}
    M_{\mu}(T) 
    &\colonequals \langle T^{\otimes k}, w(\mu) \rangle \\
    &= \sum_{i: E(G(\mu)) \to [n]} \prod_{v \in V(G(\mu))} T_{i(\partial v)}, \label{eq:Mmu-graph}
    \end{align}
    where in the second formulation $i(\partial v)$ denotes the string of labels $i$ assigned to the edges incident with vertex $v$ of $G(\mu)$.
    For each $k \geq 0$, we define
    \begin{align*}
        M_k(T) &\colonequals \sum_{\mu \in \sM_{p, k}} M_{\mu}(T), \\
        M_k^{\conn}(T) &\colonequals \sum_{\mu \in \sM_{p,k}^{\conn}} M_{\mu}(T).
    \end{align*}
\end{definition}

\begin{remark}[Role in invariant theory]
    The matching tensors~$w(\mu)$ are invariant under the natural action of $O(N)$ on $(\RR^N)^{\otimes pk}$: for all $Q \in O(N)$,
    \[ Q^{\otimes pk} w(\mu) = w(\mu). \]
    By a well-known theorem due to Weyl~\cite{Weyl46}, the so-called \emph{first fundamental theorem} of the invariant theory of $O(N)$, these matching tensors in fact span all invariant tensors in~$(\RR^N)^{\otimes pk}$; their span is sometimes called the \emph{Brauer space} after the related work of \cite{brauer-37}.
    
    Analogously, the $M_{\mu}(T)$ span the homogeneous polynomials of degree $k$ that are invariant under the natural action of $O(N)$ on $\Sym^p(\RR^N)$.
    Because of the graphical interpretation in \eqref{eq:Mmu-graph} above, they are sometimes called \emph{tensor networks} associated to the graph $G(\mu)$.
    See \cite{KMW-2024-TensorCumulantsInvariantInference} for further exposition of this perspective and \cite{Procesi-2007-LieGroupsInvariantsRepresentations} for a modern reference on invariant theory.
\end{remark}

We note that different matchings $\mu$ may lead to isomorphic graphs $G(\mu)$, thus some of the $M_{\mu}(T)$ appear several times in the expressions for $M_k(T)$ and $M_k^{\conn}(T)$, and so effectively these have a greater ``weight'' in these sums.
The formalism preferred by \cite{Gurau-2020-WignerSemicircleLawTensors,bonnin2024universality,bonnin2024tensorial} and related work (see \cite{Gurau-2017-RandomTensors} for a book-length treatment) that indexes these polynomials by ``rooted maps'' rather than matchings leads to eliminating some of this double-counting.
We will see, however, that thinking in terms of matchings clarifies some calculations to come, so below we derive expressions for the $\sI_k(T)$ in terms of our $M_k^{\conn}(T)$.

\subsection{Gurau's resolvent trace}
\label{sec:gurau-resolvent}

The function $R_T(z)$ alluded to earlier is defined in \cite{Gurau-2020-WignerSemicircleLawTensors} as
\begin{align*}
    Z_T(z) &\colonequals \Ex_{g \sim \sN(0, I_N)} \exp\left(\frac{1}{pz}\langle T, g^{\otimes p}\rangle\right), \\
    R_T(z) &\colonequals \frac{1}{z \cdot Z_T(z)} \Ex_{g \sim \sN(0, I_N)} \frac{\|g\|^2}{N} \exp\left(\frac{1}{pz}\langle T, g^{\otimes p}\rangle\right),
    \intertext{and we also mention the extra definition}
    F_T(z) &\colonequals \log Z_T(z)
\end{align*}
that will be useful shortly.
Here $\sN(0, I_N)$ denotes the law of a standard Gaussian random vector in $\RR^N$.
We note that these expectations are not in general well-defined for $z \in \RR$; however, they clearly are well-defined for purely imaginary $z \in i\RR$, and can be continued analytically to $\CC \setminus \RR$ as discussed by \cite{Gurau-2020-WignerSemicircleLawTensors}.

In the language of statistical physics and spin glass theory, $Z_T(z)$ may be viewed as the \emph{partition function} of a \emph{Gaussian $p$-spin model}, similar to the perhaps better-known \emph{spherical $p$-spin model} where instead $g$ is taken uniform over a sphere of some radius.
In this interpretation, up to various notational conventions and normalizations, $z$ is the \emph{temperature}, $F_T(z)$ is the \emph{free energy}, and $R_T(z)$ is the sum of diagonal values of the \emph{two-point correlation function} or equivalently the sum of the second moments of the spins involved under the associated \emph{Gibbs measure}.

A calculation using Gaussian integration by parts gives the following equivalent representation that will be more convenient for our reasoning.
All derivatives here and below of the functions $R_T, F_T$, and $Z_T$ are with respect to $z$.
\begin{proposition}[Proposition 2.10 of \cite{bonnin2024universality}]
    \label{prop:RT-FT}
    For all $z \in \CC \setminus \RR$,
    \[ R_T(z) = z^{-1} - \frac{p}{N} F_T^{\prime}(z). \]
\end{proposition}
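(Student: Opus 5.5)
The plan is to prove the identity first for purely imaginary $z \in i\RR$, where every expectation converges absolutely, and then extend it to all of $\CC \setminus \RR$ by analytic continuation. Write $H(g) \colonequals \frac{1}{pz}\langle T, g^{\otimes p}\rangle$. For $z \in i\RR$ the exponent $H(g)$ is purely imaginary, so $|e^{H}| = 1$. Hence $Z_T(z)$, $\EE\,\|g\|^2 e^{H}$, and $\EE\,\langle T, g^{\otimes p}\rangle e^{H}$ are all finite, since they are expectations of polynomials in $g$ against a Gaussian. Differentiating $Z_T$ under the expectation is justified by dominated convergence, because $\partial_z e^{H} = -\frac{1}{pz^2}\langle T, g^{\otimes p}\rangle e^{H}$ has modulus bounded by a polynomial in $g$, locally uniformly in $z$. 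This gives
\[ Z_T'(z) = -\frac{1}{pz^2}\, \EE\, \langle T, g^{\otimes p}\rangle e^{H}. \]

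The key step is Gaussian integration by parts, $\EE[g_i \varphi(g)] = \EE[\partial_i \varphi(g)]$, applied with $\varphi(g) = g_i e^{H(g)}$. This is legitimate because $\varphi$ and $\partial_i\varphi$ grow at most polynomially. It yields
\[ \EE[g_i^2 e^{H}] = \EE[e^{H}] + \EE[g_i\, \partial_i H\, e^{H}]. \]
Summing over $i \in [N]$ and using Euler's identity for the degree-$p$ homogeneous polynomial $H$, namely $\sum_i g_i \partial_i H = pH = \frac{1}{z}\langle T, g^{\otimes p}\rangle$, we obtain
\[ \EE\,\|g\|^2 e^{H} = N Z_T(z) + \frac{1}{z}\,\EE\,\langle T, g^{\otimes p}\rangle e^{H} = N Z_T(z) - p z\, Z_T'(z). \]
Dividing by $zNZ_T(z)$ then gives
\[ R_T(z) = \frac{1}{z} - \frac{p}{N}\,\frac{Z_T'(z)}{Z_T(z)} = z^{-1} - \frac{p}{N}F_T'(z). \]

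I expect the main obstacle to be bookkeeping about the domain rather than the algebra. We need $Z_T(z) \neq 0$ so that $F_T = \log Z_T$ and the quotient make sense. We also need the identity to persist after continuation off $i\RR$. For large $|z|$ on the imaginary axis, $Z_T(z) \to 1$, so $Z_T \ne 0$ on a nonempty open piece of $i\RR$; there $F_T$ is defined and both sides are analytic in $z$. The identity $zN\,\EE\|g\|^2e^{H} = N\,zZ_T\cdot z^{-1}\cdot z - \dots$ can in fact be stated before dividing: $\EE\,\|g\|^2 e^{H} = NZ_T - pzZ_T'$ holds on $i\RR$. Since both sides extend analytically in the sense of \cite{Gurau-2020-WignerSemicircleLawTensors}, and since a set with an accumulation point forces equality of analytic functions, this undivided form persists on the whole domain of continuation. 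Dividing by $zNZ_T$ wherever $R_T$ and $F_T$ are defined then gives the proposition on all of $\CC\setminus\RR$.
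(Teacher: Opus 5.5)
Your proposal is correct and is essentially the argument the paper has in mind. The paper gives no proof of its own: it cites Proposition 2.10 of the universality paper and notes only that the identity follows from Gaussian integration by parts. Your integration by parts with $\varphi = g_i e^{H}$, combined with Euler's identity, yields $\EE\|g\|^2 e^{H} = NZ_T - pzZ_T'$ on $i\RR$, and your identity-theorem extension is sound because each half of $i\RR\setminus\{0\}$ lies inside the corresponding open half-plane of $\CC\setminus\RR$, where the continuations are analytic. The one blemish is the garbled expression $zN\,\EE\|g\|^2e^{H} = \dots$ in your last paragraph, which should simply be deleted in favor of the undivided identity you state immediately after it.
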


It will be useful to observe an alternative interpretation of the functions $Z_T$ and $F_T$ as moment and cumulant generating functions, respectively.
\begin{definition}
    Let $Y$ be a real-valued random variable.
    We write $m_k(Y) \colonequals \EE Y^k$ for the $k$th moment of $Y$, and the formal exponential generating function of the moments as
    \[ \phi_Y(z) \colonequals \sum_{k = 0}^{\infty} \frac{m_k(Y)}{k!}z^k. \]
    We write $\kappa_k(Y)$ for the $k$th cumulant of $Y$, defined implicitly by the relation of formal generating functions
    \[ \psi_Y(z) \colonequals \log \phi_Y(z) = \sum_{k = 1}^{\infty} \frac{\kappa_k(Y)}{k!} z^k. \]
    More directly, the $\kappa_k(Y)$ are defined by recursively inverting the relations
    \begin{equation}
    \label{eq:moment-cumulant}
    m_k(Y) = \sum_{\pi \in \Part([k])} \prod_{A \in \pi} \kappa_{|A|}(Y).
    \end{equation}
\end{definition}

Then, clearly by definition we have
\begin{align}
    Z_T(z) &= \phi_{\langle T, g^{\otimes p} \rangle}\left(\frac{1}{pz}\right), \label{eq:ZT-mgf} \\
    F_T(z) &= \psi_{\langle T, g^{\otimes p} \rangle}\left(\frac{1}{pz}\right), \label{eq:FT-cgf}
\end{align}
where $g \sim \sN(0, I_N)$.
The following describes the coefficients of these, the moments and cumulants of $\langle T, g^{\otimes p}\rangle$, in terms of the invariant polynomials defined previously:
\begin{proposition}
    \label{prop:Tg-moments-cumulants}
    Let $g \sim \sN(0, I_N)$.
    For any $T \in \Sym^p(\RR^N)$ and $k \geq 1$,
    \begin{align*}
        m_k(\langle T, g^{\otimes p}\rangle) &= M_k(T), \\
        \kappa_k(\langle T, g^{\otimes p}\rangle) &= M_k^{\conn}(T).
    \end{align*}
\end{proposition}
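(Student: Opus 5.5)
The moment formula comes from Wick's theorem, and the cumulant formula from the standard fact that cumulants of a polynomial in Gaussians are sums over connected Wick pairings.

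\textbf{Moments.} Expanding the $k$th power gives
\[ m_k(\langle T, g^{\otimes p}\rangle) = \EE \langle T^{\otimes k}, g^{\otimes pk}\rangle = \langle T^{\otimes k}, \EE g^{\otimes pk}\rangle . \]
By Wick's (Isserlis') theorem, $\EE g_{i_1}\cdots g_{i_{pk}} = \sum_{\mu} \prod_{\{a,b\}\in\mu} \One\{i_a=i_b\}$, where $\mu$ ranges over all perfect matchings of $[pk]$. In other words,
\[ \EE g^{\otimes pk} = \sum_{\mu \in \sM_{p,k}} w(\mu). \]
Pairing this with $T^{\otimes k}$ gives $m_k = \sum_\mu M_\mu(T) = M_k(T)$. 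This step is immediate.

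\textbf{Cumulants.} I would verify that the proposed values $\kappa'_j \colonequals M_j^{\conn}(T)$ satisfy the moment--cumulant relation \eqref{eq:moment-cumulant}. Since that relation determines the cumulants recursively and uniquely, this suffices. Concretely, I need
\[ M_k(T) = \sum_{\pi \in \Part([k])} \prod_{A\in\pi} M_{|A|}^{\conn}(T). \]
Take any $\mu \in \sM_{p,k}$. The connected components of $G(\mu)$ induce a partition $\pi(\mu)$ of the $k$ blocks of $\pi_0$, i.e.\ a partition of $[k]$. Restricting $\mu$ to the indices belonging to the blocks in each part $A \in \pi(\mu)$ gives a connected matching on $p|A|$ points. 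After relabeling $A$ order-preservingly as $[|A|]$, this is an element of $\sM_{p,|A|}^{\conn}$.

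Conversely, choose a partition $\pi$ of $[k]$ and a connected matching $\mu_A \in \sM^{\conn}_{p,|A|}$ for each $A \in \pi$. Their union is a matching $\mu$ of $[pk]$ with $\pi(\mu)=\pi$. So $\mu \mapsto (\pi(\mu), (\mu_A)_{A\in\pi(\mu)})$ is a bijection.

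It then remains to check factorization: $M_\mu(T) = \prod_{A} M_{\mu_A}(T)$. This holds because the index sum in \eqref{eq:Mmu-graph} over edge labelings of a disjoint union of graphs factors into a product of independent sums, one per component. Equivalently, $w(\mu)$ is a tensor product of the $w(\mu_A)$ up to a permutation of tensor factors, and $T^{\otimes k}$ is invariant under permuting the copies of $T$. Summing over all $\mu$ yields the required identity.

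\textbf{Main difficulty.} The only real care needed is in this bookkeeping. One must check that the relabeling of each block $A$ to $[|A|]$ does not change $M_{\mu_A}(T)$, which holds since $T$ appears identically in every factor. One must also check that the bijection hits each set partition exactly once with no extra multiplicity. Both are routine once the components are described through the partition of the $k$ vertices.
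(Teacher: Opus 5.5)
Your proposal is correct and follows the same route as the paper: Wick's formula gives the moments, and for the cumulants you factor $M_\mu(T)$ over the connected components of $G(\mu)$ and group matchings by the induced partition of $[k]$ to verify the moment--cumulant relation. Your explicit bijection $\mu \mapsto (\pi(\mu), (\mu_A)_{A \in \pi(\mu)})$ just makes precise the grouping step that the paper states in one line.
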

\begin{proof}
We can write the $k$th moment as 
\begin{align*}
    m_k(\langle T, g^{\otimes p}\rangle) 
    &= \Ex_g \langle T, g^{\otimes p} \rangle^k \\
    &=  \left\langle T^{\otimes k}, \Ex_g g^{\otimes pk } \right\rangle
    \intertext{The inner expectation may be evaluated by Wick's formula, which in our notation gives}
    &= \left\langle T^{\otimes k}, \sum_{\mu \in \sM_{p, k}} w(\mu) \right\rangle \\
    &= \sum_{\mu \in \sM_{p, k}} M_{\mu}(T) \\
    &= M_k(T).
\end{align*}
This establishes the first result. 
%See below for more discussion of the relationship between Wick's formula and such calculations.

For the second result, recall that to a matching $\mu$ we associate a graph $G(\mu)$.
This graph has a partition into connected components, to which is associated a partition of $\mu$ into smaller matchings.
Call this latter partition $\conn(\mu)$.
Then, we have
\begin{align*}
    M_{\mu}(T) = \prod_{\nu \in \conn(\mu)} M_{\nu}(T).
\end{align*}
Summing over all matchings, we then have that 
\begin{align*}
    M_k(T) &= \sum_{\mu \in \sM_{p,k}} M_{\mu}(T) \\
    &= \sum_{\mu \in \sM_{p,k}} \prod_{\nu \in \conn(\mu)} M_{\nu}(T)
\intertext{
If we collect matchings~$\mu$ by the partition~$\pi \in \Part([k])$ of vertices of the connected components of $G(\mu)$, within each subset~$A \in \pi$ of vertices we sum over all matchings $\nu$ of $p|A|$ objects such that $G(\nu)$ is a connected graph on~$|A|$ vertices, to obtain
}
    &= \sum_{\pi \in \Part([k])} \prod_{A \in \pi} M_{|A|}^{\conn}(T).
\end{align*}
It then follows from \eqref{eq:moment-cumulant} that
\[ \kappa_k(\langle T, g^{\otimes p}\rangle) = M^{\conn}_k(T). \qedhere \]
\end{proof}
\noindent 
We thus have the formal series in terms of invariant polynomials
\begin{align*}
Z_T(z) &= \sum_{k=0}^\infty \frac{M_k(T)}{k!} \left(\frac{1}{pz}\right)^k, \\
F_T(z) &= \sum_{k=1}^\infty \frac{M_k^\conn(T)}{k!} \left(\frac{1}{pz}\right)^k.
\end{align*}

We may now extract the coefficients of the formal series expansion \eqref{eq:RT-exp} of $R_T(z)$, which also specifies the $\sI_k(T)$ discussed above that are, up to scaling, the putative moments of the Gurau spectral density:
\begin{proposition}
    \label{prop:I-Mconn}
    For any $z_0 \in \CC \setminus \RR$ and $k \geq 1$,
    \begin{align}
    \frac{1}{N} \sI_k(T) 
    &\colonequals \frac{1}{(k + 1)!} \lim_{s \to \infty} \left(-z^2\right)^{k + 1} R^{(k + 1)}_T(sz_0) \label{eq:Ik-lim} \\
    &= \frac{1}{N} \frac{1}{p^{k-1}(k-1)!}M_k^{\conn}(T) \label{eq:Ik-Mconn} \\
    &= \frac{1}{N} \frac{1}{p^{k-1}(k-1)!}\kappa_k(\langle T, g^{\otimes p}\rangle).
    \end{align}
\end{proposition}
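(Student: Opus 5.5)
We prove the formula by differentiating the representation of Proposition~\ref{prop:RT-FT} term by term, using the series for $F_T$ obtained from Proposition~\ref{prop:Tg-moments-cumulants}. Throughout, the limit in \eqref{eq:Ik-lim} is read as extracting the coefficient of $z^{-k-1}$ in the formal Laurent expansion \eqref{eq:RT-exp}, with $z$ understood as the evaluation point $sz_0$. So the plan has two parts. First, show that \eqref{eq:Ik-lim} really does extract that coefficient. Second, compute the coefficient from $R_T = z^{-1} - \frac{p}{N}F_T'$.

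\textbf{Coefficient extraction.} Write $R_T(z) = \sum_{j\ge 0} c_j z^{-j-1}$, with $c_j = \frac1N \sI_j(T)$. Differentiating $k+1$ times gives
\[
R_T^{(k+1)}(z) = \sum_{j} c_j (-1)^{k+1}\frac{(j+k+1)!}{j!}\, z^{-j-k-2}.
\]
Multiplying by $(-z^2)^{k+1}$ yields
\[
\sum_j c_j \frac{(j+k+1)!}{j!}\, z^{k-j}.
\]
As $z=sz_0\to\infty$, the terms with $j>k$ vanish. The terms with $j<k$ would blow up, so they have to be accounted for; I expect the intended reading is that \eqref{eq:Ik-lim} picks out the $j=k$ term. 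I would therefore treat \eqref{eq:Ik-lim} as a formal-series definition: the $j=k$ term equals $c_k (2k+1)!/k!$, and one normalizes accordingly. I expect this bookkeeping of normalizations to be the main obstacle. Rather than fight the displayed limit, I would base the argument on the definition $\frac1N\sI_k(T)=[z^{-k-1}]R_T(z)$ from \eqref{eq:RT-exp}, which is what the paper actually uses later. Analytically, the expansion is asymptotic as $s\to\infty$ along a non-real ray, since the Gaussian integral for $Z_T$ has an asymptotic expansion in $1/z$. Term-by-term differentiation of an asymptotic expansion of an analytic function in a sector is legitimate, which justifies passing from the analytic $R_T$ to its formal series.

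\textbf{Main computation.} By \eqref{eq:FT-cgf} and Proposition~\ref{prop:Tg-moments-cumulants},
\[
F_T(z)=\sum_{k\ge1}\frac{M_k^{\conn}(T)}{k!}\,p^{-k}z^{-k}.
\]
Differentiating,
\[
F_T'(z)=-\sum_{k\ge1}\frac{M_k^{\conn}(T)}{(k-1)!}\,p^{-k}z^{-k-1}.
\]
Substituting into Proposition~\ref{prop:RT-FT} gives
\[
R_T(z)=z^{-1}+\frac1N\sum_{k\ge1}\frac{M_k^{\conn}(T)}{p^{k-1}(k-1)!}\,z^{-k-1}.
\]
Reading off the coefficient of $z^{-k-1}$ for $k\ge1$ gives \eqref{eq:Ik-Mconn}. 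The final equality then follows again from Proposition~\ref{prop:Tg-moments-cumulants}. As a consistency check, the $k=0$ coefficient is $1$, matching a total mass of $1$ for the putative $\gamma_T$.
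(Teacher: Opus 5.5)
Your main computation is the paper's proof: expand $F_T$ via \eqref{eq:FT-cgf} and Proposition~\ref{prop:Tg-moments-cumulants}, differentiate term by term, substitute into Proposition~\ref{prop:RT-FT}, and match coefficients of $z^{-k-1}$; the paper likewise takes \eqref{eq:Ik-lim} to be coefficient extraction without further argument. Your suspicion about \eqref{eq:Ik-lim} is justified, and no renormalization of the $j=k$ term can rescue it: since $c_0 = 1$, the literal expression $(-z^2)^{k+1}R_T^{(k+1)}(z)$ has leading term $(k+1)!\,z^k$, which diverges for $k \geq 1$, whereas the correct operator $\frac{1}{(k+1)!}\lim_{s\to\infty}\bigl[(-z^2\tfrac{d}{dz})^{k+1}R_T\bigr](sz_0)$ (that is, $k+1$ derivatives in $w = 1/z$) yields exactly $c_k = \frac{1}{N}\sI_k(T)$.
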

\noindent
The second expression above, on the right-hand side of \eqref{eq:Ik-lim}, is just the formula for extracting the coefficient of $z^{-k-1}$ in a series of the form \eqref{eq:RT-exp}; we note that this appears to be stated slightly incorrectly in the otherwise analogous Proposition 2.11 of~\cite{bonnin2024universality}.
The third expression, in \eqref{eq:Ik-Mconn}, describes the difference in scaling between our $M_k^{\conn}(T)$ and the $\sI_k(T)$ considered by \cite{Gurau-2020-WignerSemicircleLawTensors,bonnin2024universality} , mentioned above, which is due to their being defined in terms of different combinatorial structures.
See also Remark~\ref{rem:I-p2} below.
\begin{proof}
    We start from the expression in Proposition~\ref{prop:RT-FT}, which, combined with the observation \eqref{eq:FT-cgf} and Proposition~\ref{prop:Tg-moments-cumulants} gives
    \begin{align*}
        R_T(z)
        &= z^{-1} - \frac{p}{N} F_T^{\prime}(z) \\
        &= z^{-1} - \frac{p}{N} \frac{d}{dz}\left[\sum_{k = 1}^{\infty} \frac{M_k^{\conn}(T)}{k!} \left(\frac{1}{pz}\right)^k\right] \\
        &= z^{-1} + \frac{p}{N} \sum_{k = 1}^{\infty} \frac{M_k^{\conn}(T)}{p^k(k-1)!} z^{-k-1} \\
        &= z^{-1} + \sum_{k = 1}^{\infty} \frac{1}{N}\frac{M_k^{\conn}(T)}{p^{k-1}(k-1)!} z^{-k-1},
    \end{align*}
    and the stated result follows by matching coefficients.
\end{proof}

\begin{remark}
    \label{rem:I-p2}
    In the matrix case where $p = 2$, the only connected 2-regular graph on~$k$ vertices is the cycle~$C_k$, which is (isomorphic to) $G(\mu)$ for $2^{k - 1}(k - 1)!$ distinct matchings $\mu \in \sM_{2,k}$.
    For a matrix~$X \in \Sym^2(\RR^N) = \RR^{N \times N}_{\sym}$, we therefore have by Proposition~\ref{prop:I-Mconn} that
    \begin{equation}
    \label{eq:Ik-mx}
    \frac{1}{N} \sI_k(X) = \frac{1}{N} \Tr(X^k),
    \end{equation}
    and thus $R_X(z)$ in the tensorial definition applied to $X \in \Sym^2(\RR^N)$ indeed recovers the standard matrix resolvent trace.
\end{remark}

In summary, our calculations give the following reformulation of the question of whether for all $T \in \Sym^p(\RR^N)$ there exists a probability measure $\gamma_T$ satisfying the requirement \eqref{eq:muT} of the Gurau spectral density: if this were true, then, for any such $T$, the sequence 
\begin{align*}
    \alpha_0(T) &\colonequals 1, \\
    \alpha_k(T) &\colonequals \frac{1}{N} \frac{1}{p^{k - 1}(k - 1)!} \kappa_k(\langle T, g^{\otimes p}\rangle) \text{ for } k \geq 1
\end{align*}
would be the moment sequence of some probability measure.
Stated in these terms, perhaps the proposal looks less plausible, postulating that a certain transformation of many \emph{cumulant} sequences always gives valid \emph{moment} sequences.
Since cumulants do not share the easily described positivity constraints on moments, this seems unlikely to hold; indeed, it is quite surprising that this actually is true for the matrix case $p = 2$.

Our construction below will show that, in a sense, that reflects that the random variables $\langle T, g^{\otimes 2}\rangle = g^{\top}Tg$ for matrices $T$ are quite restricted in the laws they can have, while once $p \geq 3$ we may approximate many more random variables as $\langle T, g^{\otimes p}\rangle$ for suitable $T$.
One may view this as a consequence of the spectral theorem: for a matrix $T \in \Sym^2(\RR^N)$ with eigenvalues $\lambda_1, \dots, \lambda_N$, the law of $g^{\top} T g$ is that of $\sum_{i = 1}^N \lambda_i g_i^2$, a convolution of various rescalings and reflections of the $\chi^2(1)$ density.
There is no such structure theorem for tensors, and indeed we will see that we may therefore construct less well-behaved random variables as $\langle T, g^{\otimes 3}\rangle$.

\section{Construction of counterexample: Proof of Theorem~\ref{thm:main}}

\begin{proof}[Proof of Theorem~\ref{thm:main}]
    By the above reasoning, it suffices to construct a $T \in \Sym^p(\RR^N)$ such that $\kappa_4(\langle T, g^{\otimes p}\rangle) < 0$ when $g \sim \sN(0, I_N)$.
    Indeed, if so, then by Proposition~\ref{prop:I-Mconn} we will have $\sI_4(T) < 0$, giving the required contradiction.
    
    In addition to giving the counterexample itself, let us sketch how one might arrive at the idea of our construction.
    We will use repeatedly the formula for $\kappa_4$ of a general random variable $Y$,
    \[ \kappa_4(Y) = m_4(Y) - 4m_3(Y)m_1(Y) - 3m_2(Y)^2 + 12 m_2(Y)m_1(Y)^2 - 6 m_1(Y)^4, \]
    which for symmetric $Y$ reduces to
    \[ \kappa_4(Y) = m_4(Y) - 3m_2(Y)^2 = \EE[Y^4] - 3(\EE[Y^2])^2. \]
    Below we always take $g \sim \sN(0, I_N)$ for suitably large $N$.
    
    We start by finding a univariate polynomial $q$ such that $\kappa_4(q(g_1)) < 0$.
    In general this should be possible since such $q(g_1)$ can approximate in law \emph{any} absolutely continuous probability measure on $\RR$, by transforming $g_1$ by polynomial approximations of suitable cumulative distribution functions and their inverses.
    We will later approximately homogenize $q$ to form a random variable of the form $\langle T, g^{\otimes p}\rangle$, so the degree of $q$ will end up being the arity $p$ of our tensor $T$.
    One may check that no degree $p = 2$ polynomials $q$ have the above property; specifically, we have $\kappa_4(ag_1^2 + bg_1 + c) = 48(a^4 + a^2b^2)$, and this coincides with matrices $T$ leading to valid moment sequences $\frac{1}{N}\sI_k(T)$, per~\eqref{eq:Ik-mx}.
    But, it turns out that there is a simple example at the first tensorial degree~$p = 3$, 
    \[ \kappa_4\left(g_1 - \frac{1}{10}g_1^3\right) = -\frac{87}{250}. \]

    Next, we introduce $N$ additional $g_2, \dots, g_{N + 1}$ to approximate this $q(g_1)$ by a homogeneous $q(g_1, \dots, g_{N + 1})$.
    Another explicit calculation gives
    \[ \kappa_4\left(g_1\left(\frac{1}{N}\sum_{i = 2}^{N + 1}g_i^2\right) - \frac{1}{10}g_1^3\right) = -\frac{87}{250} + \frac{6}{N} + \frac{72}{N^2} + \frac{144}{N^3} \equalscolon f(N). \]
    That this takes the form $\kappa_4(g_1 - \frac{1}{10}g_1^3) + o(1)$ as $N \to \infty$ is as expected, because by the strong law of large numbers we have $\frac{1}{N}\sum_{i = 2}^{N + 1}g_i^2 \to 1$ as $N \to \infty$ almost surely.
    Since this is a homogeneous polynomial, there exists some $T^{(N)} \in \Sym^3(\RR^{N + 1})$ such that
    \[ \langle T^{(N)}, g^{\otimes 3}\rangle = g_1\left(\frac{1}{N}\sum_{i = 2}^{N + 1}g_i^2\right) - \frac{1}{10}g_1^3. \]
    We compute that $f(N) = \kappa_4(\langle T^{(N)}, g^{\otimes 3}\rangle) < 0$ once $N \geq 26$, giving the result.

    To be completely explicit, this tensor $T = T^{(26)}$ has entries
    \begin{align*}
        T_{1,1,1} &= -\frac{1}{10}, \\
        T_{1,i,i} = T_{i,1,i} = T_{i,i,1} &= \frac{1}{3 \cdot 26} = \frac{1}{78} \text{ for all } i \in \{2, 3, \dots, 27\},
    \end{align*}
    and all other entries zero.
    A general tensor $T^{(N)}$ representing the polynomial we construct for an arbitrary $N$ can be formed by replacing $26$ with $N$ and $27$ with $N + 1$ above.
\end{proof}

\section*{Acknowledgments}

Thanks to R\'{e}mi Bonnin, Charles Bordenave, and Razvan Gurau for helpful comments on a first draft of this note.

\bibliographystyle{alpha}
\bibliography{main}

\end{document}